\documentclass[12pt]{article}
\usepackage{epsfig,amssymb}
\usepackage[english]{babel}
\usepackage{exscale}
\usepackage{indentfirst}
\usepackage{a4wide}
\usepackage[latin1]{inputenc}

\usepackage{amsfonts}
\usepackage{amsmath}

\usepackage{amsthm}

\newtheorem{theorem}{Theorem}[section]

\newtheorem{lemma}[theorem]{Lemma}

\usepackage{graphicx} % Required for inserting images

\title{New Upper Bounds for the Classical Ramsey Numbers $R(4,4,4)$, $R(3,4,5)$ and $R(3,3,6)$}

\author{Luis Boza}

\date{}

\begin{document}
\maketitle
\vspace{-10mm}
{\small Departamento de Matem\'atica Aplicada I,
Universidad de Sevilla, Sevilla, Spain, email:boza@us.es}

\begin{abstract}
The inequality
\[
R(k_1,\ldots,k_r)\le 2-r+\sum_{i=1}^r R(k_1,\ldots,k_{i-1},k_i-1,k_{i+1},\ldots,k_r)
\]
is well known, and it is strict whenever the right-hand side and at least one of the terms in the sum are even. Except for two known cases, the best upper bounds for classical Ramsey numbers with at least three colors have so far been obtained from this inequality.

In this paper we present new bounds such as $R(4,4,4)\le 229$, $R(3,4,5)\le 157$ and $R(3,3,6)\le 91$.
\end{abstract}

\section{Introduction}
Let $k_1,\ldots,k_r\ge 2$ be integers. The classical multicolor Ramsey number $R(k_1,\ldots,k_r)$ is defined as the smallest integer $N$ such that for every $r$-coloring $G$ of the edges of $K_N$ there exists $i$ such that $G$ contains a subgraph isomorphic to $K_{k_i}$ whose edges all have color $i$.
If $k_1=\ldots=k_r=k$, it is also denoted by $R_r(k):=R(\underbrace{k,\ldots,k}_r)$.
Clearly, permuting the $k_i$ does not change the value of the Ramsey number and $R(2,k_2,\ldots,k_r)=R(k_2,\ldots,k_r)$.

Few exact values are known, among them $R_2(4)=18$ \cite{GG}, $R(3,5)=14$ \cite{GG}, $R(3,6)=18$ \cite{Kery}, $R(4,5)=25$ \cite{MR4} and $R(3,3,4)=30$ \cite{CodFIM}.

Let
\[
P(k_1,\ldots,k_r)=2-r+\sum_{i=1}^r R(k_1,\ldots,k_{i-1},k_i-1,k_{i+1},\ldots,k_r).
\]
If $k_1=\ldots=k_r$, we also write $P_r(k_1)$.
The following general upper bound, implicit in \cite{GG}, is well known:
\begin{theorem} \label{or}
$R(k_1,\ldots,k_r)\le P(k_1,\ldots,k_r)$.
Moreover, the inequality is strict if the right-hand side is even and at least one of the terms in the sum is even.
\end{theorem}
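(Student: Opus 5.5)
The plan is the standard neighborhood-counting argument. Write $R_i:=R(k_1,\ldots,k_{i-1},k_i-1,k_{i+1},\ldots,k_r)$, so that $P=2-r+\sum_i R_i$. If some $k_i=2$, then $R_i$ involves a $1$ and equals $1$ under the usual convention. In that case the statement reduces to an identity such as $R(2,k_2,\ldots)=R(k_2,\ldots)$, which I will check directly. So assume all $k_i\ge 3$.

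Let $N=P$ and take an arbitrary $r$-coloring $G$ of $K_N$. Fix a vertex $v$ and let $d_i(v)$ be the number of edges of color $i$ at $v$, so that $\sum_i d_i(v)=N-1=1-r+\sum_i R_i$. By pigeonhole, some $i$ has $d_i(v)\ge R_i$: otherwise $\sum_i d_i(v)\le \sum_i (R_i-1)=N-1-1<N-1$, a contradiction. Consider the color-$i$ neighborhood of $v$, which has at least $R_i$ vertices. Applying the definition of $R_i$ to the coloring induced on this set, either it contains a $K_{k_i-1}$ in color $i$, which together with $v$ gives a $K_{k_i}$ in color $i$, or it contains a $K_{k_j}$ in color $j$ for some $j\ne i$. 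Either way $G$ contains the required monochromatic clique, and hence $R(k_1,\ldots,k_r)\le P$.

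For strictness, set $N=P-1$ and suppose $G$ is a coloring of $K_N$ with no monochromatic $K_{k_j}$ in color $j$ for any $j$. By the same argument, every vertex $v$ must satisfy $d_j(v)\le R_j-1$ for every $j$. Since $\sum_j d_j(v)=N-1=\sum_j (R_j-1)$, equality holds for every $j$ and every $v$. Now choose $i$ with $R_i$ even. The color-$i$ subgraph is then $(R_i-1)$-regular, so it has odd degree on $N=P-1$ vertices. Because $P$ is even, $N$ is odd, and this contradicts the handshake lemma, since a graph cannot have an odd number of vertices of odd degree. Therefore $R(k_1,\ldots,k_r)\le P-1$.

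There is no real obstacle here. The only care needed is the degenerate case $k_i=2$, and arranging the equality-forcing step so that the pigeonhole sum is exactly tight.
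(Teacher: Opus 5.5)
Your proof is correct and is essentially the paper's argument. The paper bounds each neighborhood, $d_i(v)\le R_i-1$ (Lemma \ref{ti}), notes that the degree sum is exactly tight when $n=P-1$, which forces color-$i$ regularity, and then derives a parity contradiction, phrased as $(P-1)(R_{i_0}-1)/2$ having to be an integer where you invoke the handshake lemma. Your separate treatment of $k_i=2$ is unnecessary, since with the convention $R_i=1$ the same neighborhood argument goes through verbatim: a single vertex counts as a $K_1$ of color $i$.
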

By repeated application of Theorem \ref{or} one obtains the following upper bounds:
$R(3,4,4)\le 77$, $R(3,3,5)\le 57$, $R(3,4,5)\le 157$, $R_3(4)\le 230$, $R(3,3,3,4)\le 149$ and $R_7(3)\le 12861$ (see \cite{R} for further details).

For $r\ge 3$, there are only two known upper bounds for $R(k_1,\dots,k_r)$ that improve those obtained from Theorem \ref{or}, namely $R_4(3)\le 62$ \cite{FeKR} and $R(3,3,4)=30$.

If $G$ is an $r$-coloring and $v\in V(G)$, we denote by $d_i(v)$ the number of edges incident with $v$ that have color $i$. If $G$ has $n$ vertices, then clearly $\sum_{i=1}^r d_i(v)=n-1$.

We denote by $N_i(v)$ the set of the $d_i(v)$ vertices that are adjacent to $v$ by an edge of color $i$, and if $A\subseteq V(G)$ we write $G[A]$ for the $r$-coloring of the edges of the complete graph on vertex set $A$ induced by $G$.

We denote by ${\mathcal R}(k_1,\ldots,k_r;n)$ the set of $r$-colorings of the edges of $K_n$ such that for each $i$ there is no subgraph of $G$ isomorphic to $K_{k_i}$ whose edges are all of color $i$. Thus $R(k_1,\ldots,k_r)$ is the smallest $n$ such that ${\mathcal R}(k_1,\ldots,k_r;n)=\emptyset$.

If $G\in{\mathcal R}(k_1,\ldots,k_r;n)$, $v\in V(G)$ and $1\le i\le r$, then it is well known that
$G[N_i(v)]\in{\mathcal R}(k_1,\ldots,k_{i-1},k_i-1,k_{i+1},\ldots,k_r;d_i(v))$,
since if $G[N_i(v)]$ contained a subgraph $H$ isomorphic to $K_{k_i-1}$ with all edges of color $i$, then the vertices of $H$ together with $v$ would induce in $G$ a copy of $K_{k_i}$ whose edges all have color $i$.

As an easy consequence we have:
\begin{lemma} \label{ti}
Let $G\in{\mathcal R}(k_1,\ldots,k_r;n)$, let $v\in V(G)$, and let $1\le i_0\le r$. Then:
\begin{itemize}
\item $d_{i_0}(v)\le R(k_1,\ldots,k_{i_0-1},k_{i_0}-1,k_{i_0+1},\ldots,k_r)-1$.
\item If $n=P(k_1,\ldots,k_r)-1$, then
$d_{i_0}(v)=R(k_1,\ldots,k_{i_0-1},k_{i_0}-1,k_{i_0+1},\ldots,k_r)-1$.
Moreover, the number of edges of $G$ with color $i_0$ is
\[
\frac{(P(k_1,\ldots,k_r)-1)\bigl(R(k_1,\ldots,k_{i_0-1},k_{i_0}-1,k_{i_0+1},\ldots,k_r)-1\bigr)}{2}.
\]
\end{itemize}
\end{lemma}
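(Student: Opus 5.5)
The plan is to derive both items directly from the observation stated just before the lemma, namely that $G[N_{i}(v)]\in{\mathcal R}(k_1,\ldots,k_{i-1},k_{i}-1,k_{i+1},\ldots,k_r;d_{i}(v))$ for every $v$ and every $i$.

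For the first item we apply this observation with $i=i_0$. Since $R(k_1,\ldots,k_{i_0}-1,\ldots,k_r)$ is the smallest $m$ with ${\mathcal R}(k_1,\ldots,k_{i_0}-1,\ldots,k_r;m)=\emptyset$, and the set for $m=d_{i_0}(v)$ is nonempty, we get $d_{i_0}(v)<R(k_1,\ldots,k_{i_0}-1,\ldots,k_r)$. Strictly, this uses that nonemptiness of ${\mathcal R}(\cdot;m)$ is monotone decreasing in $m$: restricting a coloring to a subset of vertices keeps it in the class. So emptiness at $R$ implies emptiness at every larger $m$, and hence $d_{i_0}(v)\le R-1$.

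For the second item we write $R_i$ for $R(k_1,\ldots,k_i-1,\ldots,k_r)$. Summing the bounds of the first item over all colors gives
\[
n-1=\sum_{i=1}^r d_i(v)\le \sum_{i=1}^r (R_i-1)=P(k_1,\ldots,k_r)-2 .
\]
When $n=P-1$, the left side equals $P-2$, so equality holds. Since every term satisfies $d_i(v)\le R_i-1$, equality of the sums forces $d_i(v)=R_i-1$ for each $i$, in particular for $i=i_0$.

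For the edge count we use the handshake lemma in the color-$i_0$ subgraph. The number of color-$i_0$ edges is $\frac12\sum_{v}d_{i_0}(v)=\frac12 (P-1)(R_{i_0}-1)$.

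There is no real obstacle here. The only point needing care is the monotonicity remark in the first item, which justifies reading "$d<R$" from the definition of $R$ as a minimum.
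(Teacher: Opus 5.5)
Your proposal is correct and follows essentially the same route as the paper: the upper bound comes from $G[N_{i_0}(v)]\in{\mathcal R}(k_1,\ldots,k_{i_0}-1,\ldots,k_r;d_{i_0}(v))$, equality comes from comparing $\sum_i d_i(v)=P-2$ with the sum of the upper bounds, and the edge count comes from the handshake lemma. The only differences are cosmetic: you force termwise equality from equality of the sums where the paper isolates $i_0$ and bounds the other terms, and you make explicit the monotonicity needed to read $d_{i_0}(v)<R$ off the definition of $R$ as a minimum, which the paper leaves implicit.
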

\begin{proof}
For every $i\in\{1,\ldots,r\}$ we have
\[
G[N_i(v)]\in{\mathcal R}(k_1,\ldots,k_{i-1},k_i-1,k_{i+1},\ldots,k_r;d_i(v)).
\]
Hence $d_i(v)\le R(k_1,\ldots,k_{i-1},k_i-1,k_{i+1},\ldots,k_r)-1$, and in particular
\[
d_{i_0}(v)\le R(k_1,\ldots,k_{i_0-1},k_{i_0}-1,k_{i_0+1},\ldots,k_r)-1.
\]
If $n=P(k_1,\ldots,k_r)-1$, then
\begin{align*}
P(k_1,\ldots,k_r)-2=\sum_{i=1}^r d_i(v)
\le d_{i_0}(v)+\sum_{\substack{i=1\\ i\ne i_0}}^r (R(k_1,\ldots,k_{i-1},k_i-1,k_{i+1},\ldots,k_r)-1)\\
=d_{i_0}(v)+1-r+\sum_{\substack{i=1\\ i\ne i_0}}^r R(k_1,\ldots,k_{i-1},k_i-1,k_{i+1},\ldots,k_r), \text{ so }\\
d_{i_0}(v)\ge (P(k_1,\ldots,k_r)-2)-\left(1-r+\sum_{\substack{i=1\\ i\ne i_0}}^r R(k_1,\ldots,k_{i-1},k_i-1,k_{i+1},\ldots,k_r)\right)\\
=R(k_1,\ldots,k_{i_0-1},k_{i_0}-1,k_{i_0+1},\ldots,k_r)-1.
\end{align*}

Since $G$ is regular of degree $R(k_1,\ldots,k_{i_0-1},k_{i_0}-1,k_{i_0+1},\ldots,k_r)-1$ in color $i_0$ and has $P(k_1,\ldots,k_r)-1$ vertices, the number of edges of color $i_0$ is
\[
\frac{(P(k_1,\ldots,k_r)-1)\bigl(R(k_1,\ldots,k_{i_0-1},k_{i_0}-1,k_{i_0+1},\ldots,k_r)-1\bigr)}{2}.
\]
\end{proof}

Since
$(P(k_1,\ldots,k_r)-1)\bigl(R(k_1,\ldots,k_{i_0-1},k_{i_0}-1,k_{i_0+1},\ldots,k_r)-1\bigr)/2$
must be an integer, $P(k_1,\ldots,k_r)$ and $R(k_1,\ldots,k_{i_0-1},k_{i_0}-1,k_{i_0+1},\ldots,k_r)$ cannot be simultaneously even, which yields Theorem \ref{or}.

In this paper we obtain new upper bounds for classical Ramsey numbers with $r\ge 3$ that are not derived from Theorem \ref{or}. Among them are $R_3(4)\le 229$, $R(3,4,5)\le 157$ and $R(3,3,6)\le 91$.

\section{Main results}

The next result provides a criterion that allows one to improve the general bound of Theorem \ref{or} in certain modular cases.

\begin{theorem}\label{lem}
Assume that
\begin{enumerate}
\item $P(k_1,\ldots,k_r)\not\equiv 1\pmod{3}$;
\item there exists $i_0\in\{1,\ldots,r\}$ with $k_{i_0}\ge 4$ such that
\[
R(k_1,\ldots,k_{i_0-1},k_{i_0}-1,k_{i_0+1},\ldots,k_r)
=
P(k_1,\ldots,k_{i_0-1},k_{i_0}-1,k_{i_0+1},\ldots,k_r)\not\equiv 1\!\!\!\!\pmod{3};
\]
\item and
$R(k_1,\ldots,k_{i_0-1},k_{i_0}-2,k_{i_0+1},\ldots,k_r)\not\equiv 1\pmod{3}$.
\end{enumerate}
Then
\[
R(k_1,\ldots,k_r)\le P(k_1,\ldots,k_r)-1.
\]
\end{theorem}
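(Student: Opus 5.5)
The plan is a counting argument one level deeper than the parity argument behind Theorem \ref{or}: count monochromatic triangles of color $i_0$ and derive a divisibility-by-$3$ contradiction. Suppose, for contradiction, that $R(k_1,\ldots,k_r)>P(k_1,\ldots,k_r)-1$. Then there is a coloring $G\in{\mathcal R}(k_1,\ldots,k_r;n)$ with $n=P(k_1,\ldots,k_r)-1$. Write $R'=R(k_1,\ldots,k_{i_0}-1,\ldots,k_r)$ and $R''=R(k_1,\ldots,k_{i_0}-2,\ldots,k_r)$. Here $k_{i_0}\ge 4$ guarantees $k_{i_0}-2\ge 2$, so $R''$ is a genuine Ramsey number. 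By the second part of Lemma \ref{ti}, every vertex $v$ satisfies $d_{i_0}(v)=R'-1$, and $G$ has exactly $E=(P(k_1,\ldots,k_r)-1)(R'-1)/2$ edges of color $i_0$.

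Next, fix a vertex $v$ and set $H=G[N_{i_0}(v)]$. We know $H\in{\mathcal R}(k_1,\ldots,k_{i_0}-1,\ldots,k_r;R'-1)$. By hypothesis 2, $R'=P(k_1,\ldots,k_{i_0}-1,\ldots,k_r)$, so $H$ has exactly $P(k_1,\ldots,k_{i_0}-1,\ldots,k_r)-1$ vertices. Lemma \ref{ti} therefore applies again, to $H$ with the tuple $(k_1,\ldots,k_{i_0}-1,\ldots,k_r)$ and the same index $i_0$. It shows that every vertex $u$ of $H$ has exactly $R''-1$ neighbours of color $i_0$ inside $H$.

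We translate this into a statement about edges. For any edge $uv$ of color $i_0$ in $G$, the vertices $w$ with $uw$ and $vw$ both of color $i_0$ are exactly the color-$i_0$ neighbours of $u$ inside $G[N_{i_0}(v)]$. Hence every edge of color $i_0$ lies in exactly $R''-1$ triangles whose three edges all have color $i_0$.

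Now double count such triangles. If $T$ is their number, each triangle contains three edges of color $i_0$, so
\[
3T=E\,(R''-1)=\frac{(P(k_1,\ldots,k_r)-1)(R'-1)(R''-1)}{2}.
\]
Consequently $3$ divides $(P(k_1,\ldots,k_r)-1)(R'-1)(R''-1)$. Since $3$ is prime, one of $P(k_1,\ldots,k_r)$, $R'$, $R''$ must be $\equiv 1\pmod 3$. Hypotheses 1, 2 and 3 exclude exactly these three possibilities, which gives the contradiction. Hence ${\mathcal R}(k_1,\ldots,k_r;P(k_1,\ldots,k_r)-1)=\emptyset$, that is, $R(k_1,\ldots,k_r)\le P(k_1,\ldots,k_r)-1$.

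The only delicate step is the second application of Lemma \ref{ti}. It needs $H$ to have exactly $P(\ldots,k_{i_0}-1,\ldots)-1$ vertices, which is where the equality $R'=P(\ldots,k_{i_0}-1,\ldots)$ in hypothesis 2 is used. It also needs this regularity to hold for every $v$, so that the per-edge triangle count is uniform over all edges of color $i_0$. Both points follow directly from Lemma \ref{ti} once it is checked that the lemma applies verbatim to the reduced tuple.
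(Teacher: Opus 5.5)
Your proof is correct and is essentially the paper's argument. Both apply Lemma \ref{ti} a second time, to $G[N_{i_0}(v)]$ with the reduced tuple, and conclude that $3$ divides $(P-1)(R'-1)(R''-1)$. The only difference is cosmetic: you count color-$i_0$ triangles per edge (each edge lies in $R''-1$ of them), while the paper counts them per vertex (each vertex lies in $M$ of them).
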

\begin{proof}
Assume that $R(k_1,\ldots,k_r)=P(k_1,\ldots,k_r)$. Let
$G\in{\mathcal R}(k_1,\ldots,k_r;P(k_1,\ldots,k_r)-1)$ and let $v\in V(G)$. Since
$G[N_{i_0}(v)]\in{\mathcal R}(k_1,\ldots,k_{i_0-1},k_{i_0}-1,k_{i_0+1},\ldots,k_r;d_{i_0}(v))$
and, by Lemma \ref{ti}, $G[N_{i_0}(v)]$ has
\[
M:=\frac{\bigl(P(k_1,\ldots,k_{i_0-1},k_{i_0}-1,k_{i_0+1},\ldots,k_r)-1\bigr)
\bigl(R(k_1,\ldots,k_{i_0-1},k_{i_0}-2,k_{i_0+1},\ldots,k_r)-1\bigr)}{2}\]
edges of color $i_0$, it follows that $v$ lies in $M$ triangles of color $i_0$.
Therefore, summing over all vertices yields $(P(k_1,\ldots,k_r)-1)M$.
Each triangle is counted three times, so this sum must be a multiple of $3$.

By the hypotheses, neither $P(k_1,\ldots,k_r)-1$ nor any of the two factors defining $M$
is a multiple of $3$, which is a contradiction. Hence the result follows.
\end{proof}

As applications of Theorem \ref{lem} we obtain the following improvements:

\begin{theorem} \label{t1}
$R_3(4)\le 229.$
\end{theorem}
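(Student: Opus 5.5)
The plan is to apply Theorem \ref{lem} to $(k_1,k_2,k_3)=(4,4,4)$ with $i_0=1$ (by symmetry any index works), after a case split on the exact value of $R(3,4,4)$. The case split is needed because Theorem \ref{lem} requires $R(3,4,4)=P(3,4,4)$, and this equality is not known.

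First compute $P(3,4,4)$ from the known exact values. We have $R(2,4,4)=R(4,4)=18$ and $R(3,3,4)=30$, so
\[
P(3,4,4)=2-3+R(2,4,4)+2R(3,3,4)=-1+18+60=77.
\]
By Theorem \ref{or}, $R(3,4,4)\le 77$. Also $P_3(4)=2-3+3R(3,4,4)=3R(3,4,4)-1$.

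\textbf{Case $R(3,4,4)\le 76$.} Then $P_3(4)\le 227$, and Theorem \ref{or} gives $R_3(4)\le 227\le 229$.

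\textbf{Case $R(3,4,4)=77$.} Then $P_3(4)=230$, and we verify the three hypotheses of Theorem \ref{lem}:
\begin{itemize}
\item Hypothesis 1: $230\equiv 2\pmod 3$, so $P_3(4)\not\equiv 1\pmod 3$.
\item Hypothesis 2: $k_{1}=4\ge 4$, and $R(3,4,4)=77=P(3,4,4)$ with $77\equiv 2\pmod 3$.
\item Hypothesis 3: $R(2,4,4)=R(4,4)=18\equiv 0\pmod 3$, so it is $\not\equiv 1\pmod 3$.
\end{itemize}
Theorem \ref{lem} therefore yields $R_3(4)\le P_3(4)-1=229$.

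There is no real obstacle. The only subtle point is that $P$ in Theorem \ref{lem} is defined through the true Ramsey numbers, which is why the case split is needed instead of plugging in the bound $77$ directly. Beyond that, one only has to check the residues mod $3$, using the known values $R(4,4)=18$ and $R(3,3,4)=30$.
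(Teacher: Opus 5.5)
Your proof is correct and follows the paper's argument. The paper splits on $P_3(4)\le 229$ versus $P_3(4)\ge 230$. Since $P_3(4)=3R(3,4,4)-1$, this is exactly your split on $R(3,4,4)\le 76$ versus $R(3,4,4)=77$. The paper then checks the same three residues, $230$, $77$ and $18\not\equiv 1 \pmod 3$, to invoke Theorem \ref{lem}.
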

\begin{proof}
Let $r=3$, $k_1=4$, $k_2=4$, $k_3=4$ and $i_0=1$. If $P_3(4)\le 229$, then the result follows from Theorem \ref{or}. We may assume that $P_3(4)\ge 230$. Since $R_2(4)=18\not\equiv 1 \pmod{3}$, $P(3,4,4)=2-r+R(4,4)+2R(3,3,4)=2-3+18+2\cdot 30=77$,
while $230\le P_3(4)=2-3+3R(3,4,4)$, it follows that $R(3,4,4)\ge 77$, hence
$R(3,4,4)=P(3,4,4)=77\not\equiv 1\pmod{3}$.

Therefore $P_3(4)=2-3+3\cdot 77=230\not\equiv 1\pmod{3}$, and Theorem \ref{lem} yields the claim.
\end{proof}

\begin{theorem}  \label{t2}
$R(3,4,5)\le 157$.
\end{theorem}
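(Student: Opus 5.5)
The plan is to imitate the proof of Theorem \ref{t1}: first pin down exactly when Theorem \ref{or} fails to give the bound, and then, in that extreme case, apply Theorem \ref{lem}.

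\textbf{Step 1: the case $P(3,4,5)\le 157$.} Since $R(2,k_2,k_3)=R(k_2,k_3)$, we have
\[
P(3,4,5)=2-3+R(4,5)+R(3,3,5)+R(3,4,4)=24+R(3,3,5)+R(3,4,4).
\]
If $P(3,4,5)\le 157$, the claim follows directly from Theorem \ref{or}. So we may assume $P(3,4,5)\ge 158$.

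\textbf{Step 2: both smaller numbers attain their upper bounds.} Apply Theorem \ref{or} to the two unknown terms:
\[
P(3,3,5)=2-3+2R(3,5)+R(3,3,4)=-1+28+30=57,
\]
\[
P(3,4,4)=2-3+R(4,4)+2R(3,3,4)=77.
\]
This gives $R(3,3,5)\le 57$ and $R(3,4,4)\le 77$. Combined with $P(3,4,5)\ge 158$, both bounds must be equalities. Hence $R(3,3,5)=57$, $R(3,4,4)=77=P(3,4,4)$, and $P(3,4,5)=158$.

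\textbf{Step 3: check the hypotheses of Theorem \ref{lem}.} Take $r=3$, $(k_1,k_2,k_3)=(3,4,5)$ and $i_0=3$, so $k_{i_0}=5\ge 4$.
\begin{enumerate}
\item $P(3,4,5)=158\equiv 2\pmod 3$, so hypothesis 1 holds.
\item $R(3,4,4)=P(3,4,4)=77\equiv 2\pmod 3$, so hypothesis 2 holds.
\item $R(3,4,3)=R(3,3,4)=30\equiv 0\pmod 3$, so hypothesis 3 holds.
\end{enumerate}
Theorem \ref{lem} then gives $R(3,4,5)\le 158-1=157$.

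\textbf{Where the difficulty lies.} The only real decision is the choice of $i_0$, and each alternative can be ruled out. Taking $i_0=2$ would need $R(3,3,5)=P(3,3,5)$, which does hold, but then hypothesis 3 involves $R(3,2,5)=R(3,5)=14\equiv 2\pmod 3$, and one would also need to check $57\not\equiv 1\pmod 3$. Taking $i_0=3$ is cleaner, since it reuses exactly the facts already established in the proof of Theorem \ref{t1}.

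The strictness clause of Theorem \ref{or} does not help in the extreme case. Although $P(3,4,5)=158$ is even, the summands $25$, $57$ and $77$ are all odd. This is why the mod-$3$ argument of Theorem \ref{lem} is needed.
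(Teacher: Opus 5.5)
Your proof is correct and follows the paper's strategy. You reduce to the extreme case $R(3,3,5)=57$, $R(3,4,4)=77$, $P(3,4,5)=158$ and then apply Theorem~\ref{lem}; the only difference is that you take $i_0=3$ (via $R(3,4,4)=P(3,4,4)=77$ and $R(3,3,4)=30$), whereas the paper takes $i_0=2$ (via $R(3,3,5)=P(3,3,5)=57$ and $R(3,5)=14$), and both choices satisfy all three hypotheses.

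One correction: your closing remark that the alternatives ``can be ruled out'' is wrong for $i_0=2$. As your own check shows, $i_0=2$ works just as well, and it is the choice made in the paper.
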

\begin{proof}
Let $r=3$, $k_1=3$, $k_2=4$, $k_3=5$ and $i_0=2$. If $P(3,4,5)\le 157$, then the result follows from Theorem \ref{or}. We may assume that $P(3,4,5)\ge 158$. Since $R(3,5)=14\not\equiv 1 \pmod{3}$,
$P(3,3,5)=2-r+2R(3,5)+R(3,3,4)=2-3+2\cdot 14+30=57$, and
\[
158\le P(3,4,5)=2-3+R(4,5)+R(3,3,5)+R(3,4,4)\le 2-3+25+R(3,3,5)+77,
\]
we have $R(3,3,5)\ge 57$, hence $R(3,3,5)=P(3,3,5)=57\not\equiv 1\pmod{3}$.

Moreover, $R(3,4,4)\ge 158-(2-3+25+57)=77$, so $R(3,4,4)=77$. Thus $P(3,4,5)=2-3+25+57+77=158\not\equiv 1\pmod{3}$, and Theorem \ref{lem} gives the result.
\end{proof}

\begin{theorem}  \label{t3}
$R(3,3,6)\le 91$.
\end{theorem}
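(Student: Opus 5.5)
We apply Theorem \ref{lem} with $r=3$, $k_1=3$, $k_2=3$, $k_3=6$ and $i_0=3$, following the same pattern as the proofs of Theorems \ref{t1} and \ref{t2}. First compute
\[
P(3,3,6)=2-3+R(2,3,6)+R(3,2,6)+R(3,3,5)=-1+2R(3,6)+R(3,3,5)=35+R(3,3,5),
\]
using $R(2,k_2,\ldots,k_r)=R(k_2,\ldots,k_r)$ and $R(3,6)=18$. If $P(3,3,6)\le 91$, the claim follows directly from Theorem \ref{or}. So we may assume $P(3,3,6)\ge 92$, that is, $R(3,3,5)\ge 57$.

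Next we pin down $R(3,3,5)$. As computed in the proof of Theorem \ref{t2}, $P(3,3,5)=2-3+2R(3,5)+R(3,3,4)=-1+28+30=57$. Theorem \ref{or} then gives $R(3,3,5)\le 57$. Combined with $R(3,3,5)\ge 57$, this yields
\[
R(3,3,5)=P(3,3,5)=57\equiv 0\pmod 3,
\]
and consequently $P(3,3,6)=92\equiv 2\pmod 3$.

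Finally we check the three hypotheses of Theorem \ref{lem}. Hypothesis 1 holds since $P(3,3,6)=92\not\equiv 1\pmod 3$. Hypothesis 2 holds since $k_{i_0}=6\ge 4$ and $R(3,3,5)=P(3,3,5)=57\not\equiv 1\pmod 3$. Hypothesis 3 holds since $R(3,3,4)=30\not\equiv 1\pmod 3$. Theorem \ref{lem} therefore gives $R(3,3,6)\le P(3,3,6)-1=91$.

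There is no real obstacle here. The only step needing care is the reduction to the extremal case: we must verify that the assumption $P(3,3,6)\ge 92$ forces equality $R(3,3,5)=P(3,3,5)$, since this equality is exactly what hypothesis 2 of Theorem \ref{lem} demands.
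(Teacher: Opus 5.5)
Your proof is correct and follows the paper's argument essentially verbatim. You reduce to the case $P(3,3,6)\ge 92$, deduce $R(3,3,5)=P(3,3,5)=57$ from $R(3,6)=18$, $R(3,5)=14$, $R(3,3,4)=30$ and Theorem \ref{or}, and then verify the three congruence hypotheses of Theorem \ref{lem} with $i_0=3$.
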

\begin{proof}
Let $r=3$, $k_1=3$, $k_2=3$, $k_3=6$ and $i_0=3$. If $P(3,3,6)\le 91$, then the result follows from Theorem \ref{or}. We may assume that $P(3,3,6)\ge 92$. Since $R(3,3,4)=30\not\equiv 1 \pmod{3}$, $P(3,3,5)=2-r+2R(3,5)+R(3,3,4)=2-3+2\cdot 14+30=57$,
and
\[
92\le P(3,3,6)=2-3+2R(3,6)+R(3,3,5)=2-3+2\cdot 18+R(3,3,5),
\]
we obtain $R(3,3,5)\ge 57$, hence $R(3,3,5)=P(3,3,5)=57\not\equiv 1\pmod{3}$.

Therefore $P(3,3,6)=2-3+2\cdot 18+57=92\not\equiv 1\pmod{3}$, and Theorem \ref{lem} yields the claim.
\end{proof}

\begin{theorem}  \label{t4}
$R(\underbrace{3,\ldots,3}_{10},4)\le 608152553$.
\end{theorem}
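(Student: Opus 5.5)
The plan is to argue as in Theorems \ref{t1}--\ref{t3}, but along a longer chain of upper bounds. I write $3^{j}$ for $j$ copies of $3$. The numbers involved are $R_{j}(3)$ and $R(3^{j},4)$ for $j\le 10$.

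\emph{Step 1: the two chains.} First I tabulate the bounds obtained by repeatedly applying Theorem \ref{or}, including the parity refinement, starting from $R_3(3)=17$, $R_4(3)\le 62$ and $R(3,3,4)=30$. For the pure-$3$ chain this gives $R_5(3)\le 307$, $R_6(3)\le 1838$, $R_7(3)\le 12861$, $R_8(3)\le 102882$, $R_9(3)\le 925931$, $R_{10}(3)\le 9259301$ (the last using strictness) and $R_{11}(3)\le 101852302$. For the mixed chain I use
\[
P(3^{j},4)=2-(j+1)+j\,R(3^{j-1},4)+R_{j+1}(3),
\]
which gives the bounds $b_j$ for $R(3^{j},4)$: $b_2=30$, $b_3=149$, and so on up to $b_{10}$.

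\emph{Step 2: choose where to apply Theorem \ref{lem}.} Theorem \ref{lem} cannot be applied directly at $j=10$ with $i_0$ the colour of the $4$. The reason is that $101852302\equiv 1\pmod 3$, so the hypothesis $R_{11}(3)\not\equiv 1\pmod 3$ would fail. So I would apply Theorem \ref{lem} at an intermediate level $j$, to $R(3^{j},4)$ with $i_0$ the colour of the $4$. Its hypotheses then become the following.
\begin{itemize}
\item $R_{j+1}(3)=P_{j+1}(3)\not\equiv 1\pmod 3$.
\item $R_j(3)\not\equiv 1\pmod 3$.
\item $P(3^{j},4)\not\equiv 1\pmod 3$.
\end{itemize}
Reducing the pure-$3$ bounds mod $3$ gives $307\equiv 1$, $1838\equiv 2$, $12861\equiv 0$, $102882\equiv 0$ and $925931\equiv 2$. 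So the admissible levels are among $j=6,7,8,9$. I would find the smallest such $j$ for which $b_j\not\equiv 1\pmod 3$ as well. Applying Theorem \ref{lem} there lowers $b_j$ by one. Feeding the improved value back into the recursion up to $j=10$, re-applying parity strictness where it becomes available, should give exactly $608152553$.

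\emph{Step 3: the conditional argument.} Equality $R_{j+1}(3)=P_{j+1}(3)$ and the exact values of the lower-level numbers are not known, so I use the same "assume otherwise" device as in Theorems \ref{t1}--\ref{t3}. Suppose $P(3^{10},4)$, computed from the true Ramsey values, is at least $608152554$. The mixed recursion is monotone with positive coefficients in every Ramsey number it involves. Therefore each of those numbers must equal its upper bound from Step 1 (resp.\ $b_{j-1}$). Otherwise the total would drop below $608152554$, and this can be checked by subtracting $1$ times the relevant coefficient, as in the proof of Theorem \ref{t2}. This forces $R_{j+1}(3)=P_{j+1}(3)$, fixes all the residues mod $3$, and gives $R(3^{j},4)=b_j=P(3^{j},4)$. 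That contradicts Theorem \ref{lem}.

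\emph{Main obstacle.} The hard part is the bookkeeping in Steps 1--2. One must pick the right level $j$ and verify that the single gain of $1$, after being multiplied by the factors $j+1,\dots,10$ and adjusted by parity at each subsequent level, lands precisely on $608152553$. In particular, I must check at every level whether Theorem \ref{or}'s strictness still applies after the change, since an odd/even flip could cost or gain an extra unit.
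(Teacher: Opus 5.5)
Your bound $R_{10}(3)\le 9259301$ in Step 1 is not justified, and the rest of your plan depends on it. We have $P_{10}(3)=10R_9(3)-8$, which is always even. But the strict case of Theorem~\ref{or} also needs an \emph{even term} in the sum. The only term is $R_9(3)$, and its bound $925931$ is odd. So Theorem~\ref{or} gives only $R_{10}(3)\le 9259302$, and then $R_{11}(3)\le P_{11}(3)\le 11\cdot 9259302-9=101852313\equiv 0\pmod 3$.

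Your numbers also fail a sanity check. With $R(3^{9},4)\le 50630025$ and your $R_{11}(3)\le 101852302$, Theorem~\ref{or} alone would give $R(3^{10},4)\le 608152543$, which is stronger than the statement.

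So your Step~2 reason for excluding $j=10$ disappears, and $j=10$ is exactly where the paper applies Theorem~\ref{lem}. Assume $P(3^{10},4)\ge 608152554$. Then:
\begin{itemize}
\item The inequality $P(3^{10},4)\le 10\cdot 50630025+R_{11}(3)-9$ forces $R_{11}(3)=P_{11}(3)=101852313$.
\item This in turn forces $R_{10}(3)=9259302$, since otherwise $P_{11}(3)\le 101852302$.
\item Now $P(3^{10},4)=608152554$, $R_{11}(3)$ and $R_{10}(3)$ are all $\equiv 0\pmod 3$, and Theorem~\ref{lem} applies.
\end{itemize}
Your Step~3 device is the right one, but it has to be run at this top level.

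The intermediate-level route you propose instead cannot succeed. In the extremal configuration, $P(3^{j},4)$ for $j=6,7,8,9$ equals $50854$, $458854$, $4596748$ and $50630026$. All of these are $\equiv 1\pmod 3$, so hypothesis~1 of Theorem~\ref{lem} fails at every one of these levels.

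Moreover, at $j=7$ and $j=9$ the parity refinement of Theorem~\ref{or} already gives $R\le P-1$. That is all Theorem~\ref{lem} can ever give, so nothing could be gained there even if the residues were favourable.

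Finally, the target number itself rules the plan out. A unit gain at any level $j\le 9$ is multiplied by the coefficient $10$ in $P(3^{10},4)=10R(3^{9},4)+R_{11}(3)-9$. After parity adjustments it lowers the final bound by at least $9$, never by exactly $1$. The value $608152553=608152554-1$ shows that the single unit must be gained at $j=10$.
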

\begin{proof}
Let $r=11$, $k_1=\ldots=k_{10}=3$, $k_{11}=4$ and $i_0=11$. If $P(\underbrace{3,\ldots,3}_{10},4)\le 608152553$, then the result follows from Theorem \ref{or}. We may assume that $P(\underbrace{3,\ldots,3}_{10},4)\ge 608152554$.

From $R_7(3)\le 12861$, by repeated application of Theorem \ref{or} we obtain
$R_8(3)\le 102882$, $R_9(3)\le 925931$, $R_{10}(3)\le P_{10}(3)\le 9259302$ and $R_{11}(3)\le P_{11}(3)\le 101852313$.

From $R(3,3,3,4)\le 149$, by repeated application of Theorem \ref{or} we obtain
\begin{align*}
R(3,3,3,3,4)\le 900,\; R(\underbrace{3,\ldots,3}_{5},4)\le 6333, \; R(\underbrace{3,\ldots,3}_{6},4)\le 50854, \\
R(\underbrace{3,\ldots,3}_{7},4)\le 458853,\,
R(\underbrace{3,\ldots,3}_{8},4)\le 4596748,\,
R(\underbrace{3,\ldots,3}_{9},4)\le 50630025,\\ \text{and }
R(\underbrace{3,\ldots,3}_{10},4)\le
P(\underbrace{3,\ldots,3}_{10},4)\le 608152554.
\end{align*}
Since we are assuming $P(\underbrace{3,\ldots,3}_{10},4)\ge 608152554$, we deduce
\[
P(\underbrace{3,\ldots,3}_{10},4)=608152554\not\equiv 1\!\!\!\!\pmod{3}.
\]

If $R_{11}(3)\le 101852312$, then $
P(\underbrace{3,\ldots,3}_{10},4)=2-11+10R(\underbrace{3,\ldots,3}_9,4)+R_{11}(3)
\le 2-11+10\cdot 50630025+101852312=608152553$, which is a contradiction. Hence we may assume that $R_{11}(3)=P_{11}(3)=101852313\not\equiv 1\pmod{3}$.

If $R_{10}(3)\le 9259301$, then $P_{11}(3)=2-11+11R_{10}(3)\le 2-11+11\cdot 9259301=101852302$,
a contradiction. Therefore we may assume that $R_{10}(3)=9259302\not\equiv 1\pmod{3}$, and Theorem \ref{lem} yields the result.
\end{proof}

Finally, we note that any further improvement obtained from Theorem \ref{lem} is a consequence of Theorems \ref{t1}--\ref{t4}. In particular, for $r\le 10$ no improved upper bounds are obtained for $R(\underbrace{3,\ldots,3}_{r-1},5)$ and $R(\underbrace{3,\ldots,3}_{r-2},4,4)$ beyond those derived from Theorem \ref{or}.

\end{document}